\documentclass[11pt]{amsart}

\usepackage[utf8]{inputenc}
\usepackage{amsmath}
\usepackage{amssymb}
\usepackage{amsfonts}
\usepackage{amsthm}
\usepackage{mathtools}
\usepackage{thmtools}
\usepackage{enumerate}
\usepackage[top=2cm, bottom=2cm, left=2cm, right=2cm]{geometry}
\usepackage{graphicx}
\usepackage[all]{xy}
\usepackage{titlesec}
\usepackage{hyperref}
\hypersetup{
 colorlinks=true, 
 linkcolor=blue, 
 citecolor=blue, 
 filecolor=blue, 
 urlcolor=blue 
}
\usepackage{rotating}
\usepackage{xcolor}
\usepackage{setspace}
\usepackage[T1]{fontenc}
\usepackage{tikz}
\usepackage{comment}
\usepackage[all]{xy}
\usepackage[capitalize]{cleveref}

\titleformat{\section}
{\filcenter\large\bf} 
{\thesection.\ } 
{1pt} 
{} %

\titleformat{\subsection}[hang]
{\filcenter\bf}
{\thesubsection.}
{1pt}
{}

\declaretheoremstyle[bodyfont=\normalfont]{normalbody}

\declaretheorem[numberwithin=section,name=Theorem]{theorem}
\declaretheorem[sibling=theorem,style=normalbody,name=Definition]{definition}
\declaretheorem[sibling=theorem,name=Corollary]{corollary}
\declaretheorem[sibling=theorem,name=Lemma]{lemma}
\declaretheorem[sibling=theorem,name=Proposition]{proposition}

\declaretheorem[sibling=theorem,style=normalbody,name=Remark]{remark}

\newcommand{\supp}{\operatorname{supp}}
\newcommand{\Z}{\mathbb{Z}}
\newcommand{\N}{\mathbb{N}}

\newcommand{\R}{\mathbb{R}}
\newcommand{\C}{\mathbb{C}}

\renewcommand{\P}{\mathbb{P}}
\newcommand{\G}{\mathbb G}
\newcommand{\Ga}{\G_\mathrm{a}}

\newcommand{\Hom}{\operatorname{Hom}}
\newcommand{\Aut}{\operatorname{Aut}}

\newcommand{\PGL}{\mathrm{PGL}}

\newcommand{\Gm}{\mathbb{G}_{\mathrm{m}}}
\newcommand{\cone}{\mathrm{cone}}


\DeclareFontFamily{U}{wncy}{}
\DeclareFontShape{U}{wncy}{m}{n}{<->wncyr10}{}
\DeclareSymbolFont{mcy}{U}{wncy}{m}{n}
\DeclareMathSymbol{\Sh}{\mathord}{mcy}{"58}

\DeclareFontFamily{U}{wncy}{}
\DeclareFontShape{U}{wncy}{m}{n}{<->wncyr10}{}
\DeclareSymbolFont{mcy}{U}{wncy}{m}{n}
\DeclareMathSymbol{\Ch}{\mathord}{mcy}{"51}

\title{Complete toric varieties with semisimple automorphism group}

\author{Gabriel Barría Galland}
\address{Instituto de Matemáticas, Universidad de Talca, Chile}
\email{gabriel.barria@utalca.cl}

\begin{document}

\begin{abstract}
Let $X$ be a complete toric variety. We give a criterion to decide whether $X$ decomposes as a product of complete toric varieties by analyzing the $1$-skeleton of its fan. More precisely, we prove that any direct-sum decomposition of the $1$-skeleton induces a corresponding direct-sum decomposition of the fan itself. As an application, we show that if the identity component of the automorphism group is semisimple, then $X$ must be a product of projective spaces.
\end{abstract}

\maketitle

		\mbox{\hspace{1.6em} \textbf{ Keywords:} toric varieties, automorphisms groups}

\vspace{1em}
		\mbox{\hspace{1.6em} \textbf{ MSC codes (2020):} 14M25, 14J50} 
		\mbox{\hspace{1.3em} }

%
%
%
%

\section*{Introduction}

Toric varieties form a well-studied class of algebraic varieties, described by a combinatorial structure known as a fan, see \cref{def:fan}. By definition, a toric variety $X$ contains an algebraic torus $(\mathbb{C}^*)^n$ as an open dense subset, and the torus action on itself extends to all of $X$. These varieties provide a rich family of examples at the intersection of algebraic geometry and combinatorics: their geometry is entirely encoded by the combinatorics of the associated fan, a feature that often renders explicit computations feasible. Classical examples include projective spaces and their products.

In this paper we are interested in the automorphism group of complete toric varieties. Notably, a toric variety $X$ always has at least the acting torus $T \simeq (\C^*)^n$ as a subgroup of $\Aut_X$, but $X$ may admit additional symmetries beyond this torus. In general, $\Aut(X)$ can be quite complicated, but a classical result of Matsumura and Oort \cite{MatOo} shows that it has the structure of a locally algebraic group. In particular, $\operatorname{Aut}^\circ(X)$ is itself a connected algebraic group. This fact means we can study $\Aut^\circ(X)$ using the tools of algebraic group theory.

In \cite{LucLieToric}, the authors show that if $X$ is a product of two complete toric varieties, then the automorphism group splits accordingly as a direct product, up to permutation of factors. This raises the following natural question: \emph{does a factorization of the identity component of the automorphism group of a toric variety force a corresponding factorization of the variety?}

To answer this question affirmatively under specific conditions, we first prove a result useful for decomposing a complete toric variety into products by examining the 1-skeleton of its fan:

\setcounter{section}{1}

\begin{theorem} \label{thm:main}
	Let \(V,W\) be \(\R\)-vector spaces and \(\Sigma\) a fan in $V\oplus W$ such that \(\Sigma(1)  = \Sigma(1)|_V \sqcup \Sigma(1)|_W\). Then \(\Sigma = \Sigma|_V \oplus \Sigma|_W\).
\end{theorem}

Using this theorem, we show that our original question is answered positively when \(\Aut_X^\circ \) is a semisimple algebraic group. In fact, Demazure proved that the automorphism group of a toric variety with a semisimple automorphism group is of the form \(\prod_{i=1}^k \PGL_{n_i}\) \cite{DemazureToric}. Furthermore, it is well known that if a toric variety has automorphism group \(\PGL_{n+1}\), then it is isomorphic to \( \P^n\), see \cref{cor 1}. Thus, using \cref{thm:main}, we provide an elementary proof of the following theorem:

\begin{theorem} \label{thm:application}
	Let $X$ be a complete toric variety. If $\Aut_X^{\circ}$ is semi-simple, then 
	\[
    X \simeq \prod_{i=1}^{k}\P^{n_i}.
    \]
\end{theorem}

This last theorem seems to be well-known to experts. A proof of this theorem using Lie theory has been communicated to us by Shintaro Kuroki on \cite{Shintaro}. We offer an alternative, elementary algebraic proof.

\medskip

\noindent \textbf{Acknowledgements. } I would like to thank Giancarlo Lucchini for his guidance on this work. I woulf also like to thank to Alvaro Liendo for his comments on the writing of this paper. This work was partially supported by ANID via beca Doctorado Nacional 2025 Folio 21251132.

\setcounter{section}{0}
\section{Preliminaries}

In this section we collect the required preliminaries on toric varieties and root systems.

\subsection{Preliminaries on Toric Varieties}

A toric variety \( X \) is a normal algebraic variety that contains a dense open subset isomorphic to the torus \( T \simeq (\C^*)^n \), where \( n \in \N \) and such that the action of the torus \( T \) on itself by translations extends to the entire toric variety \( X \). Let \( X \) and \( X' \) be two toric varieties with tori \( T \) and \( T' \), respectively. A toric morphism is a morphism of varieties \( \phi : X \to X' \) that restricts to a morphism of algebraic groups \( \varphi : T \to T' \).

Let \( M \) be a free \(\Z\)-module of finite rank, and let \( N \coloneqq \Hom_{\Z}(M, \Z) \) be its dual. We assign to \( M \) and \( N \) the vector spaces \( M_{\R} \coloneqq M \otimes_{\Z} \R \) and \( N_{\R} \coloneqq N \otimes_{\Z} \R \), respectively. We have the dual pairing \( \langle \cdot, \cdot \rangle : N_{\R} \times M_{\R} \to \R \) given by \( \langle p, m \rangle = p(m) \). Consider the algebraic torus \( T = N \otimes_{\Z} \Gm \), whose character lattice is \( M \) and whose lattice of one-parameter subgroups is \( N \). For each \( m \in M \), we denote by \( \chi^m : T \to \Gm \) the corresponding character, and for each \( p \in N \), we denote by \( \lambda_p : \Gm \to T \) the corresponding one-parameter subgroup.

A subset $\sigma$ of $N_\R$ is a convex polyhedral cone if there exists a finite set $S \subset N_\R$ such that
\[
\sigma = \cone(S) \coloneqq \left\{\sum_{s \in S}a_s s \mid a_s \in \R_{\geq 0}\right\}
\]
A convex polyhedral cone is pointed or strictly convex if it does not contain any non-zero vector subspaces of \(N_{\R}\). In this work, we restrict ourselves to pointed cones and will simply refer to them as cones. Let \(\sigma \subseteq N_{\R}\) be a cone. We say that \(\tau\) is a face of \(\sigma\) if it satisfies the following property: for every pair \(x, y \in \sigma\) such that \(x + y \in \tau\), it follows that \(x, y \in \tau\). We now state the definition of a fan, which will allow us to properly reference its properties.

\begin{definition} \label{def:fan}
	Let $V$ be an $\R$-vector space. A fan $\Sigma$ is a collection of cones $\sigma \subseteq V$ such that:
	\begin{enumerate}
		\item \label{def:fan a} Each cone $\sigma \in \Sigma$ is a pointed cone.
		
		\item \label{def:fan b} For every $\sigma \in \Sigma$, every face $\tau$ of $\sigma$ satisfies $\tau \in \Sigma$.
		
		\item \label{def:fan c} For every pair $\sigma_1,\sigma_2 \in \Sigma,$ the intersection $\sigma_1 \cap \sigma_2$ is a face of both $\sigma_1$ and $\sigma_2$.
	\end{enumerate}
	Additionally, if  $\bigcup_{\sigma \in \Sigma}\sigma = V$, we say that the fan is complete.
\end{definition}

Let $\Sigma$ and $\Sigma'$ be two fans in $N_\R$ and $N'_\R$ respectively. A morphism of fans is a linear transformation $\psi: N_\R \to N'_\R$ that restricts to a homomorphism $\psi':N\to N'$ such that, for every $\sigma \in \Sigma$, there exists $\sigma' \in \Sigma'$ with $\psi(\sigma) \subseteq \sigma'$. There is a categorical equivalence between fans and normal toric varieties with toric morphisms, which can be explored in detail in \cite{FultonToric} or \cite{CoxToric}.

Let $\Sigma$ be a fan in $N_\R$. The 1-skeleton of $\Sigma$ is defined as the set $\Sigma(1)$ of $1$-dimensional cones in $\Sigma$. Each such cone will be associated with a minimal generator $\rho \in N$, by abuse of notation, we identify $\rho $ with the cone.

\subsection{Preliminaries on Root Data and Root Systems}
Now, we present a definition from the theory of linear algebraic groups.

\begin{definition} \label{def:rootdatum}
	A root datum is a quadruple $\Psi = (X,R,X^{\ast},R^{\ast})$, where:
	\begin{enumerate}
		\item $X$ and $X^*$ are abelian groups of finite rank with a dual pairing $\langle \cdot , \cdot \rangle :X \times X^{\ast} \to \mathbb Z$.
		
		\item $R$ and $R^*$ are finite subsets of $X$ and $X^*$ respectively, with a bijection $\alpha \mapsto \alpha^*$ from $R$ to $R^*$. 
	\end{enumerate}
	
	For each $\alpha \in R$, we define the endomorphisms $s_\alpha$ and $s_{\alpha^*}$ by: 
    \begin{equation}\label{eq:1}
        s_\alpha(x) = x - \langle x,\alpha^*\rangle \alpha, \ \forall x\in X \quad \text{and} \quad s^*_\alpha(y) = y - \langle \alpha,y\rangle \alpha^* \ \forall y \in X^*
    \end{equation}

	satisfying the following axioms:
	\begin{enumerate}[(i)]
		\item \label{def:rootdatumI} For all $\alpha \in R, \langle \alpha, \alpha^* \rangle = 2$. 
		\item \label{def:rootdatumII} For all $\alpha \in R, s_\alpha(R) = R$ and $s_{\alpha^*}(R^*) = R^*$.
	\end{enumerate}
\end{definition}
	In the context of automorphism groups of toric varieties, the spaces $X$ and $X^*$ will correspond to the character lattice and cocharacter lattice of a torus.
	
	\begin{remark}\label{rmk:rootdata}
		Let $\Psi = (X,X^*,R,R^*)$ be a root datum. Note that $(X^*, X, R^*,R)$ is also a root datum and it is called the dual root datum of $\Psi$.
		
		We define $V \coloneqq \langle R \rangle \otimes_\Z \R$, where $\langle R \rangle$ is the subgroup generated by $R$ in $X$. If $R \neq \varnothing$,  then $R$ is a root system in $V$ (see \cite[Ch. VI, $\S$ 1, Definition 1]{BourbakiLie46}). That is, it satisfies the following axioms:
		\begin{enumerate}[(a)]
			\item \label{rmk:rootdataI}$R$ is finite, spans $V$, and $0 \notin R$. 
			
			\item \label{rmk:rootdataII} For each $\alpha \in R$ there exists $\alpha^* \in V^*$ such that $\langle \alpha , \alpha^* \rangle = 2$ and the endomorphism $s_\alpha$ from \cref{eq:1} stabilizes $R$. 
			\item \label{rmk:rootdataIII} For each $\alpha \in R,$ $\alpha^*(R) \subseteq \Z$. 
		\end{enumerate}
		
		As stated in \cite[Ch. VI, \S 1, Proposition 2]{BourbakiLie46}, $R^*$ is a root system in $V^*$ with the endomorphism $s_{\alpha^*} \coloneqq s_\alpha^*$.
	\end{remark}
	
	\begin{lemma}
		Let $T$ be an algebraic torus and $M$ the character lattice of $T$. If $R \subseteq M$ is a root system in $M_\R$, then there exists a unique subset $R^* \subseteq N \coloneqq \Hom_{\Z}(M,\Z)$ such that $(M,N,R,R^*)$ is a root datum.
	\end{lemma}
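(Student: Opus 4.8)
The natural candidate for $R^*$ is the set of coroots of $R$, and the statement splits into existence (the coroots exist and assemble into the dual half of a root datum) and uniqueness (they are forced). The plan is to dispatch uniqueness first, since it also reveals what $R^*$ has to be. Fix $\alpha \in R$ and suppose $\beta,\gamma \in N$ both satisfy $\langle\alpha,\beta\rangle = \langle\alpha,\gamma\rangle = 2$ with the reflections $x \mapsto x - \langle x,\beta\rangle\alpha$ and $x \mapsto x - \langle x,\gamma\rangle\alpha$ stabilizing $R$. Then the element $t \in \GL(M_\R)$ obtained by composing one reflection with the inverse of the other stabilizes the finite set $R$, fixes $\alpha$, and acts trivially on $M_\R/\R\alpha$; hence $t - \id$ is nilpotent while $t$ has finite order (it permutes $R$, which spans $M_\R$), so $t = \id$. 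The two reflections therefore coincide, and since $\alpha \neq 0$ this forces $\beta = \gamma$ on $M_\R$. Consequently any $R^*$ making $(M,N,R,R^*)$ a root datum is unique, determined root by root.

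For existence I would appeal to the classical coroot construction of \cref{Obs RootData}: as $R$ is a root system spanning $M_\R$, each $\alpha \in R$ carries a functional $\alpha^* \in (M_\R)^* = N_\R$ with $\langle\alpha,\alpha^*\rangle = 2$ whose reflection $s_\alpha$ stabilizes $R$, and, by \cite[Ch.~VI, \S 1, Prop.~2]{BourbakiLie46}, the set $R^* \coloneqq \{\alpha^* : \alpha \in R\}$ is a root system in $N_\R$ whose reflections $s_{\alpha^*}$ stabilize $R^*$. Granting integrality (addressed below), the bijection $\alpha \mapsto \alpha^*$ together with $\langle\alpha,\alpha^*\rangle = 2$ supplies the first axiom of \cref{Def RootDatum}, the two stability statements supply the second, and the endomorphisms \cref{eq:1} are precisely the reflections just used. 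Thus $(M,N,R,R^*)$ is a root datum, and by the first paragraph it is the only one.

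The main obstacle is the integrality $\alpha^* \in N$, that is $\langle m,\alpha^*\rangle \in \Z$ for \emph{every} $m \in M$ rather than only for $m \in R$. The root-system axioms give the Cartan integers $\langle\beta,\alpha^*\rangle \in \Z$ for all $\beta \in R$, so $\alpha^*$ is automatically integral on the sublattice $\langle R\rangle$ spanned by the roots; the work is to extend this to all of $M$. The clean route is to show that each reflection $s_\alpha$ stabilizes the lattice $M$ itself: then $m - s_\alpha(m) = \langle m,\alpha^*\rangle\,\alpha \in M$ for every $m \in M$, and comparison with a primitive generator of the ray $\R_{\geq 0}\,\alpha$ yields $\langle m,\alpha^*\rangle \in \Z$. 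I expect this lattice-stability step to be the crux, as it is exactly where one must use that $R$ lives inside the genuine character lattice $M$ (so that the coroots are $N$-integral) and not merely inside an abstract real vector space; the remaining verifications are routine bookkeeping with \cref{Obs RootData} and \cref{Def RootDatum}.
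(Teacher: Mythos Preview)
Your uniqueness argument is correct and is precisely what the paper does via \cite[Lemma~7.1.8]{SpringerLAG}: the coroot $\alpha^*$ is uniquely determined in $N_\R$ by the reflection condition. You are also right to flag the integrality $\alpha^* \in N$ as the real issue---the paper's own proof only places $\alpha^*$ in the dual of $V = M_\R$ and never verifies that it lands in the lattice $N$.

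However, your proposed route to integrality has a genuine gap. Even granting that $s_\alpha$ stabilizes $M$, the identity $m - s_\alpha(m) = \langle m,\alpha^*\rangle\,\alpha \in M$ only says that $\langle m,\alpha^*\rangle\cdot\alpha$ is a lattice point; if $\alpha = k\alpha_0$ with $\alpha_0$ primitive and $k>1$, you conclude merely $\langle m,\alpha^*\rangle \in \tfrac{1}{k}\Z$, not $\langle m,\alpha^*\rangle \in \Z$. In fact the lemma as stated is false: take $M=\Z$, $N=\Z$, and $R=\{3,-3\}$. All three root-system axioms of \cref{Obs RootData} hold (the reflection is $x\mapsto -x$, which stabilizes both $R$ and $M$, and $\alpha^*(R)=\{\pm 2\}\subseteq\Z$), yet the unique coroot is $\alpha^*=2/3\notin N$, so no root datum $(M,N,R,R^*)$ exists. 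An extra hypothesis is needed---for instance that the root lattice $\langle R\rangle_\Z$ equals $M$, which gives $\alpha^*(M)=\alpha^*(\langle R\rangle)\subseteq\Z$ directly from axiom~\ref{Obs RootData III}. In the paper's intended applications this is satisfied, since $R$ arises as the root set of a semisimple group relative to a maximal torus with character lattice $M$.
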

	
	\begin{proof}
		Let $\alpha \in R$. By \cite[Lemma 7.1.8]{SpringerLAG}, there exists a unique $\alpha^{\ast}$ in the dual space of $V$ satisfying $\langle \alpha, \alpha^* \rangle = 2$ and, for all $x \in M$,
		\[
		s_\alpha(x) = x - \langle x , \alpha^* \rangle \alpha.
		\]
		 Consequently, there exists a unique set of coroots $R^*$ fulfilling the conditions \ref{rmk:rootdataI}, \ref{rmk:rootdataII} and \ref{rmk:rootdataIII}. Thus, there is a bijection $R \to R^*$, and for each root $\alpha \in R$ the endomorphisms $s_\alpha$ and $s_\alpha^*$ satisfy the properties of \cref{def:rootdatum} (\ref{def:rootdatumI}) and (\ref{def:rootdatumII}) by property \ref{rmk:rootdataII}.
	\end{proof}

	\section{Proof of \cref{thm:main}}
	
	In this section we develop the necessary tools to prove \cref{thm:main}. Let $\Sigma$ be a fan in $N_\R$ and $W\subseteq N_\R$. For any subset $S$ of $\Sigma$, we use the following notation
	\[
	S|_W \coloneqq \{\sigma \cap W \mid \sigma \in S\}.
	\]
	We now state the following lemma:
	\begin{lemma}\label{lem:coneprojection}
		Let $V\oplus W$ be a real vector space and $\Sigma$ a fan in $V \oplus W$ such that $\Sigma(1) = \Sigma(1)_V \sqcup \Sigma(1)_W$. If $\sigma \in \Sigma$, then $\sigma\cap V$ and $\sigma \cap W$ are faces of $\sigma$. In particular, we have that $\Sigma \subseteq \Sigma|_V \oplus \Sigma|_W$. 
	\end{lemma}
	
	\begin{proof}
		Suppose that $x + y \in \sigma \cap V$, with $x,y \in \sigma$. We have that $x = v_x+w_x$ and $y=v_y+w_y$, where \(v_x, v_y \in \sigma \cap V\) and \(w_x, w_y \in \sigma \cap W\). Now, since we supposed that $x+y \in \sigma \cap V \subseteq V$, we deduce that $w_x+w_y = 0$, so $w_x = -w_y$. Because $\sigma$ is strictly convex we must have $w_x = w_y = 0$. Hence \(x, y \in V\), and since \(x, y \in \sigma\), it follows that \(x, y \in \sigma \cap V\), so $\sigma \cap V$ is a face of $\sigma$. Analogously we have that $\sigma \cap W$ is a face of $\sigma$. 
		
		We now prove that $\Sigma \subseteq \Sigma|_V \oplus \Sigma|_W$. To do this, we show that $\sigma \cap V \oplus \sigma \cap W = \sigma$. For $\sigma \in \Sigma$ note that $ \sigma \cap V \oplus \sigma \cap W \subseteq \sigma$ as $\sigma$ contains both intersections. Now, any $x \in \sigma$ can be written as a positive linear combination of the generators of $\sigma$
		\[
		x = \lambda_1 v_1 + \dots + \lambda_nv_n + \mu_1w_1+ \dots + \mu_mw_m,
		\]
		where the \(v_i\) are generators contained in \(V\) and the \(w_j\) are generators contained in \(W\). So $x$ can be written as an element of $\sigma\cap V \oplus \sigma\cap W$.
	\end{proof}
	
	We now continue studying what happens when the 1-skeleton of a complete fan can be separated into two nonempty sets contained in the components of a direct sum.
	
	\begin{lemma}\label{lem:projectioncomponent}
		Let \( V \oplus W \) be a real vector space. Suppose that \(\Sigma\) is a complete fan in \( V \oplus W \) such that \(\Sigma(1) = \Sigma(1)|_V \sqcup \Sigma(1)|_W\). Then \(\Sigma|_V\) and \(\Sigma|_W\) are complete fans in \( V \) and \( W \), respectively.
	\end{lemma}
	
	\begin{proof}
		We will prove that \(\Sigma|_W\) is a fan in \( W \); the proof for \(\Sigma|_V\) is analogous.  First we prove that \( \Sigma|_W \) satisfies condition (\ref{def:fan a}). Let \(\sigma\) be a cone in \(\Sigma\). Then \(\sigma = \text{cone}(b_1, \ldots, b_n)\) with generators \( b_i \in \Sigma(1) \) for all \( i = 1, \ldots, n \). Since \( \Sigma(1) = \Sigma(1)|_V \sqcup \Sigma(1)|_W \), the set \(\{b_1, \ldots, b_n\}\) admits a partition into disjoint subsets:
		
		\[\{w_1, \ldots, w_k\} \sqcup \{v_1, \ldots, v_\ell\},\]
		
		where \(\{w_1, \ldots, w_k\} \subseteq W\) and \(\{v_1, \ldots, v_\ell\} \subseteq V\). Consequently, \( \sigma \cap W = \text{cone}(w_1, \ldots, w_k)\), which is a convex polyhedral cone, establishing (\ref{def:fan a}).
		
		To verify condition (\ref{def:fan b}), let \(\tau\) be a face of \(\sigma \cap W\). By \cref{lem:coneprojection}, \(\sigma \cap W\) is a face of \(\sigma\). Since \(\Sigma\) satisfies (\ref{def:fan b}), \(\tau \in \Sigma\). Thus \(\tau \in \Sigma|_W\), proving (\ref{def:fan b}).
		
		For condition (\ref{def:fan c}), first observe that if \(\tau\) is a face of \(\sigma\), then for all \(x, y \in \sigma\) with \(x + y \in \tau\), we must have \(x, y \in \tau\). This property holds for any \(x, y \in \sigma \cap W\). Therefore, every face of \(\sigma\) contained in \(W\) determines a face of \(\sigma \cap W\). Now let \(\sigma'\) be another cone in \(\Sigma\). Since \(\Sigma\) satisfies (\ref{def:fan c}), \(\sigma \cap \sigma'\) is a common face of both \(\sigma\) and \(\sigma'\). It follows that \((\sigma \cap \sigma') \cap W\) is a face of both \(\sigma \cap W\) and \(\sigma' \cap W\), confirming (\ref{def:fan c}).
		
		Finally, we prove completeness. Suppose \(\Sigma|_W\) is not complete in \(W\). Then there exists some \(w \in W\) not contained in any cone of \(\Sigma|_W\).
		
		That is, for every \( \sigma \in \Sigma \), \( w \notin \sigma \cap W \). Since \( w \in W \), this implies \( w \notin \sigma \) for all \( \sigma \in \Sigma \), contradicting the completeness of \( \Sigma \). Therefore, \( \Sigma|_W \) is complete in \( W \).
	\end{proof}
	
	The following lemma provides the final ingredient needed to prove \cref{thm:main}. We show that for complete fans, containment implies equality.
	
	\begin{lemma}\label{lem:subfan}
		Let \( V \) be a real vector space, and let \(\Sigma\) and \(\Sigma'\) be two complete fans in \(V\) with \(\Sigma \subseteq \Sigma'\). Then \(\Sigma = \Sigma'\).
	\end{lemma}
	
	\begin{proof}
		 Consider any \(\sigma \in \Sigma'\). Since \(\Sigma\) is a complete fan, its support is the whole space. Let \(C \subseteq \Sigma\) be the set of cones \(\tau \in \Sigma\) such that \(\tau \cap \sigma \neq \{0\}\). Then:
        \[
        \sigma \subseteq \bigcup_{\tau \in C} \tau.
        \]
        As \(\Sigma \subseteq \Sigma'\), we have \(C \subseteq \Sigma'\). By \cref{def:fan}, each intersection \(\sigma \cap \tau\) is a face of \(\sigma\) and satisfies \(\sigma \cap \tau \in \Sigma\) for all \(\tau \in C\). Now observe that:
        \begin{equation} \label{Eq1}
            \sigma = \bigcup_{\tau \in C} (\sigma \cap \tau).
        \end{equation}
        Thus, \(\sigma\) is expressed as a union. If there exists \(\tau \in C\) such that \(\sigma \cap \tau = \sigma\), then \(\sigma\) is a face of \(\tau\) and consequently \(\sigma \in \Sigma\) by \cref{def:fan}.
        
        Otherwise, if \(\sigma \cap \tau \subsetneq \sigma\) for all \(\tau \in C\), then each \(\sigma \cap \tau\) is a proper face of \(\sigma\). By (\ref{Eq1}), every point in \(\sigma\) lies in some proper face. This contradicts \cite[Section 1.2, (7)]{FultonToric} since \(\sigma\) would have empty relative interior. Therefore this case is impossible, and we conclude that \(\Sigma = \Sigma'\).
	\end{proof}
	
	We now proceed with the main proof of this section:
	
	\begin{proof}[Proof of \cref{thm:main}]
		Let \( x \in V \oplus W \) be arbitrary. Then \( x = v + w \) with \( v \in V \) and \( w \in W \). By \cref{lem:projectioncomponent}, \( \Sigma|_V \) and \( \Sigma|_W \) are complete fans in \( V \) and \( W \) respectively. Thus there exist cones \( \sigma_V \in \Sigma|_V \) and \( \sigma_W \in \Sigma|_W \) such that \( v \in \sigma_V \) and \( w \in \sigma_W \). Consequently, \( x \in \sigma_V + \sigma_W \), proving that \( \Sigma|_V \oplus \Sigma|_W \) is complete. Since \cref{lem:coneprojection} gives \( \Sigma \subseteq \Sigma|_V \oplus \Sigma|_W \), and both are complete fans, \cref{lem:subfan} implies that \(\Sigma = \Sigma|_V \oplus \Sigma|_W\).
	\end{proof}
	
	Finally, we present a result useful for verifying that a fan is not complete.

	\begin{lemma} \label{lem:rayspan}
		Let \(\Sigma\) be a complete fan in an \(n\)-dimensional space. Then \(|\Sigma(1)| > n\).
	\end{lemma}
	
	\begin{proof}
		Suppose to the contrary that \(|\Sigma(1)| \leq n\). Let \(\Sigma(1) = \{v_1, \ldots, v_m\}\) with \(m \leq n\). If \(m < n\), the generators cannot span the \(n\)-dimensional space, contradicting completeness. If \(m = n\) and \(\{v_1, \ldots, v_n\}\) is linearly dependent, then \(\dim(\operatorname{Span}\{v_1, \ldots, v_n\}) < n\), leading to the same contradiction as before.
        
        Now consider the case where \(m = n\) and \(\{v_1, \ldots, v_n\}\) is linearly independent. We show that $v = -\sum_{i=1}^n v_i$ belongs to no cone in \(\Sigma\). Suppose there exists \(\sigma \in \Sigma\) containing \(v\). Then $v$ would be a non-negative linear combination of some generators:
        \[
        v = \sum_{i=1}^n \lambda_i v_i \quad \text{with} \quad \lambda_i \geq 0.
        \]
        But by construction, \(v\) is also the negative combination:
        \[
        v = -\sum_{i=1}^n v_i.
        \]
        Adding these expressions yields:
        \[
        \sum_{i=1}^n (1 + \lambda_i) v_i = 0.
        \]
        Since the generators are linearly independent, \(1 + \lambda_i = 0\) for all \(i\), implying \(\lambda_i = -1 < 0\), contradicting \(\lambda_i \geq 0\). Thus \(|\Sigma(1)| > n\).
	\end{proof}
	
	\section{Demazure Roots}
	
	Demazure roots were introduced by Demazure in \cite{DemazureToric}, who showed in that same article that this concept is fundamental for studying the automorphism group of complete toric varieties (we will be more precise about this in \cref{sec:3.2}). It thus becomes a crucial object for the development of this article and will be used repeatedly. Let \( T \) be a maximal torus in a toric variety \( X \), both of dimension \( n \). Recall that their character module \( M = \Hom(T, \Gm) \) and cocharacter module \( N = \Hom(M, \Z) \) are then free \(\Z\)-modules of rank \( n \).
	
	\begin{definition} \label{def:demazureroot}
		Let \(\Sigma\) be a fan in \(N_\mathbb{R}\). The set of Demazure roots of \(\Sigma\) is
		\[
		\mathcal R(\Sigma) := \left\{ \alpha \in M \mid \exists \rho \in \Sigma(1),\, \langle \rho, \alpha \rangle = -1 \ \text{and}\  \langle \rho', \alpha \rangle \geq 0,\  \forall \rho' \in \Sigma(1) \setminus \{\rho\} \right\}.
		\]
		Moreover, we denote by \(\rho_\alpha\) the unique element in \(\Sigma(1)\) satisfying \(\langle \rho_\alpha, \alpha \rangle = -1\).
	\end{definition}
	
	Note that Demazure roots depend solely on the 1-skeleton of the fan.
	
	\begin{remark}
		Demazure roots were originally defined in \cite{DemazureToric}. The original definition uses opposite signs compared to our convention. Readers consulting this reference should note this discrepancy.
	\end{remark}

	\subsection{Roots of the projective space}
	
	From now on, we denote by $I_n$ the set $\{1,\dots,n\}$ and by $I_n^{0}$ the set $\{0,1,\dots,n\}$. In a suitable basis, the standard fan of \(\P^n\) is formed by taking all strictly convex cones generated by subsets of its rays:
	\[
	\Sigma_{\P^n}(1) = \{e_i \mid i \in I_n\} \cup \left\{-\sum_{i=1}^n e_i\right\} \subseteq N \otimes_{\Z} \R.
	\]
	See \cite[Example 3.1.10]{CoxToric} for details. Following \textit{ibid.}, we henceforth fix \( e_0 := -\sum_{i=1}^n e_i \), so that \(\Sigma_{\P^n}(1) = \{e_i \mid i \in I_n^0\}\).
	
	We now compute the Demazure roots of \(\Sigma_{\P^n}\) by direct application of \cref{def:demazureroot}.
	
	\begin{proposition} \label{prop:projective_space_roots}
		Let \(\Sigma\) be a fan with \(\Sigma(1) = \{e_i \mid i \in I_n^0\} = \Sigma_{\P^n}(1)\). Then
		\begin{equation}
			\mathcal{R}(\Sigma) = \{\pm e_i^* \mid i \in I_n\} \cup \{e_{ij}^* \mid i,j \in I_n, i \neq j\},
		\end{equation}
		where \( e_{ij}^* := e_i^* - e_j^* \). In particular, this is the set of Demazure roots for \(\P^{n}\).
	\end{proposition}
	
	\begin{proof}
		Direct computation via \cref{def:demazureroot} yields:
		\[
		\{\pm e_i^* \mid i \in I_n\} \cup \{e_{ij}^* \mid i,j \in I_n, i \neq j\} \subseteq \mathcal{R}(\Sigma).
		\]
		For the reverse inclusion, let \(\alpha = \sum_{i=1}^n \lambda_i e_i^*\) be a Demazure root. By \cref{def:demazureroot}, there exists a unique \(\rho \in \Sigma(1)\) with \(\langle \rho, \alpha \rangle = -1\). 
		
		First consider \(\rho = e_j\) for some \(j \in I_n\). Then \(\langle e_j, \alpha \rangle = \lambda_j = -1\). Since \(\rho_\alpha = e_j\) is the unique ray satisfying \(\langle \rho_\alpha, \alpha \rangle < 0\), we must have \(\langle e_k, \alpha \rangle = \lambda_k \geq 0\) for all \(k \in I_n \setminus \{j\}\). 
		
		Additionally, the root condition requires \(\langle e_0, \alpha \rangle \geq 0\), implying:
		\[
		-\sum_{i=1}^n \lambda_i \geq 0.
		\]
		Thus \(\sum_{i \in I_n \setminus \{j\}} \lambda_i \leq 1\). Since \(\lambda_k \geq 0\) for all \(k \neq j\), at most one index \(s \in I_n \setminus \{j\}\) satisfies \(\lambda_s \neq 0\), and in that case \(\lambda_s = 1\). This shows that:
		\[
		\alpha \in \{-e_j^*\} \cup \{e_{sj}^* \mid s \in I_n \setminus \{j\}\} \subseteq \{\pm e_i^* \mid i \in I_n\} \cup \{e_{ij}^* \mid i,j \in I_n, i \neq j\}.
		\]
		Now consider \(\rho = e_0\). By \cref{def:demazureroot}, \(\langle e_j, \alpha \rangle \geq 0\) for all \(j \in I_n\), i.e., \(\lambda_j \geq 0\) for all \(j\). Then:
		\[
		\langle e_0, \alpha \rangle = -\sum_{i=1}^n \lambda_i = -1.
		\]
		Thus \(\sum_{i=1}^n \lambda_i = 1\). Since all \(\lambda_j \geq 0\), exactly one index \(s \in I_n\) satisfies \(\lambda_s = 1\) (others zero). Therefore \(\alpha = e_s^*\), which completes the inclusion.
	\end{proof}
	
	For the subsequent section, we denote by \( \mathcal{R}_n \) the set of Demazure roots of \(\Sigma_{\P^n}\). We have just characterized the roots of any fan sharing \(\P^n\)'s 1-skeleton. We now establish that for complete toric varieties, this set of rays uniquely determines the fan (up to a change of basis). In other words, we prove that \(\P^n\) is characterized by its 1-skeleton.
	
	\begin{proposition} \label{prop:projective_space_skeleton}
		Let \(X\) be a complete toric variety with fan \(\Sigma\). If \(\Sigma(1) = \{e_i \mid i \in I_n^0\}\), then \(X \simeq \P^n\).
	\end{proposition}
	
	\begin{proof}
		Let \(\sigma\) be any cone in \(\Sigma\), generated by a subset \(S \subseteq \Sigma(1)\). If \(S = \Sigma(1)\), then \(\sigma\) would contain the line \(\{\lambda e_0 \mid \lambda \in \R\}\) (since \(e_0 = -\sum_{i=1}^n e_i\)), violating strict convexity. Thus \(S \subsetneq \Sigma(1)\).
		
		By definition, \(\Sigma_{\P^n}\) consists precisely of all strictly convex cones generated by proper subsets of \(\{e_i \mid i \in I_n^0\}\). Therefore \(\sigma \in \Sigma_{\P^n}\), establishing \(\Sigma \subseteq \Sigma_{\P^n}\). Since both \(\Sigma\) and \(\Sigma_{\P^n}\) are complete fans, \cref{lem:subfan} implies \(\Sigma = \Sigma_{\P^n}\).
	\end{proof}
	
	\subsection{Relation Between Roots and Root Data}\label{sec:3.2}
	
	In this subsection, we connect the root datum of linear algebraic groups with the Demazure roots defined at the beginning of this section. Let \(\Sigma\) be a complete fan, \(X\) the corresponding toric variety, and \(\mathcal{R}(\Sigma)\) its set of Demazure roots. As mentioned in the introduction, \(\Aut_X\) is a locally algebraic group and \(\Aut_X^\circ\) is a connected algebraic group. Fixing a maximal torus \(T \subseteq \Aut_X\), we obtain the character module \(M \coloneqq \Hom(T, \G_m)\) and its dual \(N \coloneqq \Hom(M, \Z)\). How do Demazure roots of \(\Sigma\) relate to \(\Aut X\)? A partial answer is provided by \cite[Theorem 2]{LucLieToric}, which synthesizes key results from Demazure:
	\begin{enumerate}
		\item \cite[Section 4, \S 5, Theorem 3]{DemazureToric}
		\item \cite[Section 4, \S 6, Proposition 11]{DemazureToric}
	\end{enumerate}
	
	Note that Demazure's original theorems contain additional statements that complement the following result.
	
	\begin{theorem}\label{thm:demazure}
		Let \(\Sigma\) be a complete fan and \(X\) its corresponding toric variety. Denote by \(R(\Sigma)\) the set of Demazure roots of \(\Sigma\). For \(\alpha \in \mathcal R(\Sigma)\), let \(\rho_\alpha\) be the corresponding ray, and define the rational morphism:
		\[
		a_\alpha : \Ga \times T \to T \quad \text{given by} \quad (s, t) \mapsto t \cdot \lambda_{\rho_\alpha}\big(1 + s\chi^\alpha(t)\big).
		\]
		\begin{enumerate}
			\item \label{Demazure 1} The rational morphism \(a_\alpha\) defines a faithful \(\Ga\)-action on \(X\). In particular, the induced morphism \(t_\alpha : \Ga \to \Aut_X\) is a closed immersion.
			
			\item \label{Demazure 2} The image of \(t_\alpha\) is normalized by \(T\). Specifically:
			\[
			t^{-1} t_\alpha(s)  t = t_\alpha\big(\chi^\alpha(t)s\big) \quad \forall s \in \Ga,  \forall t \in T.
			\]
			\item \label{Demazure 3} A character \(\beta\) of \(T\) is a Demazure root of \(\Sigma\) if and only if \(a_\beta\) satisfies conditions (\ref{Demazure 1}) and (\ref{Demazure 2}).
			
			\item \label{Demazure 4} The connected component \(\Aut^\circ_X\) is generated by \(T\) and the images of \(t_\alpha\) for all \(\alpha \in \mathcal R(\Sigma)\). Consequently, \(\Aut_X\) is smooth.
			
			\item \label{Demazure 5} \(\Aut_X / \Aut^\circ_X\) is a finite constant algebraic group, isomorphic to the quotient of $\Aut(\Sigma)$ by the subgroup generated by morphisms of the form:
			\[
			\lambda \mapsto \lambda - \langle \lambda, \alpha \rangle (\rho_{-\alpha} - \rho_\alpha),
			\]
			where \(\alpha, -\alpha \in R(\Sigma)\).
		\end{enumerate}
	\end{theorem}
	
	Items (\ref{Demazure 1}), (\ref{Demazure 2}), (\ref{Demazure 3}), and (\ref{Demazure 4}) were originally proved in \cite[Section 4, \S 5, Theorem 3]{DemazureToric}. However, (\ref{Demazure 1}), (\ref{Demazure 4}), and part of (\ref{Demazure 5}) were reproved in \cite{LucLieToric} using the modern framework presented here. Item (\ref{Demazure 5}) is established in \cite[Section 4, \S 6, Proposition 11]{DemazureToric}. When consulting Demazure's work, readers should note the sign convention difference for \(\rho_\alpha\) and \(\rho_{-\alpha}\). The following theorem will facilitate part of the proof of our next result.
	
	\begin{lemma} \label{lem:rootdatum_demazureroots}
		Let \(X\) be a complete toric variety with fan \(\Sigma\) and \(G = \Aut X\) its automorphism group. Let \(T\) be a maximal torus of \(G\). Then there exists a natural correspondence between Demazure roots and roots of the adjoint action of \(T\) on the Lie algebra \(\mathfrak{g}\) of \(G\).
		
		In particular, if \(G^\circ\) is reductive, the Demazure roots naturally identify with roots in the associated root datum. That is, if \(\Psi(M, \Delta)\) is the root datum of \(G\), then \(\Delta = \mathcal{R}(\Sigma)\).
	\end{lemma}
	
	\begin{proof}
		By \cite[Lemma 7.3.3]{SpringerLAG} and the correspondence between Lie subalgebras and connected subgroups \cite[\S 13.1, Theorem]{HumphLAG}, there exists a bijection between roots \(\alpha\) of the adjoint action of \(T\) on \(\mathfrak{g}\) and closed immersions \(\iota_\alpha : \Ga \to G\) satisfying:
		\[
		t \iota_\alpha(s) t^{-1} = \iota_\alpha(\chi^\alpha(t)s) \quad \forall t \in T,  \forall s \in \Ga.
		\]
		Conversely, \cref{thm:demazure} establishes that Demazure roots correspond bijectively to closed immersions \(t_\beta : \Ga \to G\) defined by:
		\[
		t \cdot t_\beta(s) \cdot t^{-1} = t_\beta(\chi^\beta(t)^{-1}s).
		\]
		Note the sign inversion in the exponent compared to the adjoint action correspondence.
		
		Since \(\chi^\beta(t)^{-1} = \chi^{-\beta}(t)\), this implies \(\beta \leftrightarrow -\alpha\) under the correspondence. Thus we obtain:
		\[
		\text{Adjoint roots} \ \alpha \quad \leftrightarrow \quad \text{Demazure roots} \ -\alpha.
		\]
		
		When \(G^\circ\) is reductive, the root system satisfies \(\alpha \in \Delta \iff -\alpha \in \Delta\). Therefore the sign reversal becomes irrelevant, yielding \(\Delta = \mathcal{R}(\Sigma)\).
	\end{proof}
	
\section[Semisimple automorphism group]{Toric varieties with semisimple automorphism group}
	The objective of this section is to apply the framework developed in Sections 2 and 3 to prove the main application of the article (\cref{thm:application}). 
	
	\subsection{The case of the projective linear group}
	Fix \( n \in \mathbb{N} \). In this section, we prove that if a fan has the root system of \(\operatorname{PGL}_{n+1}\), then it must be the fan of \(\P^n\). We achieve this by classifying complete fans whose Demazure root set equals \(\mathcal{R}_n\).
	
	Our objective is to determine necessary conditions on the rays of a complete fan that yield the root system \(\mathcal{R}_n\). We adopt a root-centric approach: For each fixed root \(\alpha \in \mathcal{R}_n\), we characterize coroots \(\rho\) that are uniquely associated to \(\alpha\) and cannot correspond to other roots in \(\mathcal{R}_n\).
	
	After determining all possible coroot assignments, we synthesize this information to reconstruct all 1-skeletons compatible with \(\mathcal{R}_n\). The foundational step is understanding how the entire root system constrains candidate rays \(\rho_\alpha\). These constraints are formalized in the following definition.
	
	\begin{definition}\label{def:rayset}
		For \(\alpha \in \mathcal R_n\), define
		\[
		\mathcal{P}(\alpha) \coloneqq \left\{ p \in N \mid \langle p, \alpha \rangle = -1 \ \text{and}\  \langle p, \alpha' \rangle \geq -1,  \forall \alpha' \in \mathcal R_n \setminus \{\alpha\} \right\}.
		\]
	\end{definition}
	Note that \cref{def:demazureroot} associates each root with a unique ray. However, the proof of \cref{prop:projective_space_roots} shows that for \(n > 1\), every ray \(p\) in \(\Sigma_{\P^n}(1)\) satisfies:
	\[
	\left|\left\{ \alpha \in \mathcal R_n \mid \langle p, \alpha \rangle = -1 \right\}\right| > 1.
	\]
	Furthermore, \cref{def:demazureroot} implies that for every root \(\alpha \in \mathcal R_n\) and every ray \(p \in \Sigma_{\P^n}(1)\), \(\langle p, \alpha \rangle \geq -1\). The following proposition provides an explicit description of \(\mathcal{P}(\alpha)\).
	
	\begin{proposition}\label{prop:rayset}
		Let \(\alpha \in \mathcal R_n\) and \(\epsilon \in \{-1, 1\}\).
		\begin{enumerate}
			\item If \(\alpha = \epsilon e_i^*\), then  
			\[
			\mathcal{P}(\alpha) = \left\{ \sum_{k=1}^n a_k e_k \mid a_i = -\epsilon \ \text{and}\ a_k \in \{0, -\epsilon\} \ \forall k \in I_n \setminus \{i\} \right\}.
			\]
			
			\item If \(\alpha = e_{ij}^*\), then  
			{ 
			\[
			\mathcal{P}(\alpha) = \left\{ \sum_{k=1}^n a_k e_k \mid a_i \in \{0, -1\},\ a_j = 1 + a_i,\ \text{and}\ a_k \in \{0, a_i, a_j\} \ \forall k \in I_n \setminus \{i,j\} \right\}.
			\]}
		\end{enumerate}
	\end{proposition}
	
	\begin{remark} \label{rmk a}
		For any \(\alpha \in \mathcal R_n\), either \(\alpha = e_i^*\) for some \(i \in I_n\) or \(\alpha = e_{ij}^*\) for some pair \(i,j \in I_n\). In either cases, any element \(\rho\) in the right-hand set satisfies  
		\[\rho = \sum_{k=1}^n a_k e_k \quad \text{with} \quad a_k \in \{0, \epsilon\} \ \text{and} \ \epsilon \in \{\pm 1\}.\]
	\end{remark}
	
	\begin{proof}
		We first prove that the sets on the right-hand side are contained in those on the left. Let \(\alpha \in \mathcal R_n\). Suppose \(\alpha = \epsilon e_i^*\) for some \(i \in I_n\). If \(\rho = \sum_{k=1}^n a_k e_k\) with \(a_i = -\epsilon\) and \(a_k \in \{0, -\epsilon\}\), then \(\langle \rho, \alpha \rangle = -\epsilon^2 = -1\). Now consider \(\alpha' \in\mathcal R_n \setminus \{\alpha\}\):
		\begin{enumerate}
			\item If \(\alpha' = \delta e_l^*\) for some \(l \in I_n\) and \(\delta \in \{-1, 1\}\), then \(\langle \rho, \alpha' \rangle = \delta a_l\).
			
			\item If \(\alpha' = e_{lm}^*\) for some \(l, m \in I_n\), then \(\langle \rho, \alpha' \rangle = a_l - a_m\).
		\end{enumerate}
		By \cref{rmk a}, in both cases \(\langle \rho, \alpha' \rangle \in \{0, \pm 1\}\). Thus \(\rho \in \mathcal{P}(\alpha)\).
		
		Now suppose \(\alpha = e_{ij}^*\) for some \(i, j \in I_n\). If \(\rho = \sum_{k=1}^n a_k e_k\) with \(a_i = -1\), \(a_j = 0\), and \(a_k \in \{0, -1\}\), then \(\langle \rho, \alpha \rangle = -1\). For \(\alpha' \in \mathcal R_n \setminus \{\alpha\}\), note that \(\rho\) is a special case of the previous calculation, so \(\langle \rho, \alpha' \rangle \in \{0, \pm 1\}\). Thus \(\rho \in \mathcal{P}(\alpha)\). The calculation is analogous for \(\rho = \sum_{k=1}^n a_k e_k\) with \(a_i = 0\), \(a_j = 1\), and \(a_k \in \{0, 1\}\). Hence \(\rho \in \mathcal{P}(\alpha)\).
		
		We now prove the reverse inclusion. Let \(\alpha\) be a root in \(\mathcal R_n\) and \(\rho = \sum_{k=1}^n a_k e_k \in \mathcal{P}(\alpha)\). Since \(\epsilon e_i^* \in \mathcal R_n\) for all \(i \in I_n\) and \(\epsilon \in \{-1, 1\}\), \cref{def:rayset} implies that for every \(i \in I_n\) and \(\epsilon \in \{-1,1\}\):
		\[
		\langle \rho, \epsilon e_i^* \rangle = \epsilon a_i \geq -1.
		\]
		Therefore \(a_i \in \{0, \pm 1\}\) for all \(i \in I_n\). Now consider \(i, j \in I_n\) with \(i \neq j\). Since \(e_{ij}^* \in \mathcal R_n\), \cref{def:rayset} gives:
		\[
		\langle \rho, e_{ij}^* \rangle = a_i - a_j \geq -1.
		\]
		If \(a_j = -a_i\) with both non-zero, then \(\langle \rho, e_{ij}^* \rangle = -2\) (when \(a_i = 1\)) or \(\langle \rho, e_{ij}^* \rangle = 2\) (when \(a_i = -1\)). In the latter case, \(\langle \rho, e_{ji}^* \rangle = -2\). Both scenarios contradict \(\rho \in \mathcal{P}(\alpha)\). Thus, for any pair \(i, j \in I_n\), non-zero coefficients cannot have opposite signs.
		
		If \(\alpha = \epsilon e_i^*\) for some \(i \in I_n\), then:
		\[
		-1 = \langle \rho, \alpha \rangle = \epsilon a_i \implies a_i = -\epsilon.
		\]
		Since all non-zero coefficients must share the sign of \(a_i\) (i.e., \(-\epsilon\)), we have \(a_k \in \{0, -\epsilon\}\) for all \(k \in I_n\). Therefore, \(\rho\) has the form:
		\[
		\rho = \sum_{k=1}^n a_k e_k, \text{ with } a_i = -\epsilon \text{ and } a_k \in \{0, -\epsilon\} \text{ for } k \in I_n.
		\]
		This proves the set equality for the case \(\alpha = \epsilon e_i^*\). Now suppose \(\alpha = e_{ij}^*\) for some pair \(i, j \in I_n\). Then:
		\[
		\langle \rho, \alpha \rangle = a_i - a_j = -1.
		\]
		
		By previous results, \(a_i, a_j \in \{0, \pm 1\}\) and they share the same sign. Thus only two solutions satisfy this equation:
		\[
		a_i = -1 \ \text{and}\ a_j = 0, \quad \text{or} \quad a_i = 0 \ \text{and}\ a_j = 1.
		\]
		
		If \(a_i = -1\), then \(a_j = 0\) and:
		\[
		\rho = \sum_{k=1}^n a_k e_k \quad \text{with} \quad a_i = -1,\ a_j = 0,\ \text{and}\ a_k \in \{0, -1\} = \{0, a_i\}\ \text{for}\ k \in I_n \setminus \{i,j\}.
		\]
		
		If \(a_i = 0\), then \(a_j = 1\) and:
		\[
		\rho = \sum_{k=1}^n a_k e_k \quad \text{with} \quad a_i = 0,\ a_j = 1,\ \text{and}\ a_k \in \{0, 1\} = \{0, a_j\}\ \text{for}\ k \in I_n \setminus \{i,j\}.
		\]
		
		In both cases, \(\rho\) satisfies:
		\[
		\rho = \sum_{k=1}^n a_k e_k \quad \text{with} \quad a_i \in \{0, -1\},\ a_j = 1 + a_i,\ \text{and}\ a_k \in \{0, a_i, a_j\}\ \text{for}\ k \in I_n \setminus \{i,j\}.
		\]
		
		This establishes the set equality for the case \(\alpha = e_{ij}^*\).
	\end{proof}
	
	Let \(\alpha \in \mathcal R_n\). To construct the 1-skeleton of a fan with root system \(\mathcal R_n\), we select a ray \(\rho_\alpha\) from the set \(\mathcal{P}(\alpha)\). Fixing this ray forces other roots to share \(\rho_\alpha\). The set of roots associated to a fixed ray is defined as follows:
	
	\begin{definition} \label{def:rootset}
		For \(\rho \in N\), define
		\[\mathcal R(\rho) \coloneqq \{\alpha \in \mathcal R_n \mid \langle \rho, \alpha \rangle = -1\}.\]
	\end{definition}
	
	The following proposition computes the elements of \(\mathcal R(\rho)\) and provides a cardinality formula.
	
	\begin{proposition} \label{prop:rootset}
		Let \(\alpha \in \mathcal R_n\). Suppose \(\Sigma\) is a complete fan with \(R(\Sigma) = \mathcal R_n\), and let \(\rho_\alpha \in \Sigma(1)\) be the ray associated to root \(\alpha\). If \(\rho_\alpha = \sum_{k=1}^n a_k e_k\) with \(a_k \in \{0, \epsilon\}\) and \(a_i = \epsilon \in \{\pm 1\}\), then
		\[
		\mathcal R(\rho_\alpha) = \bigcup_{j \in \supp(\rho_\alpha)} \left\{ (\epsilon - a_k) e_k^* - \epsilon e_j^* \mid k \in I_n \right\}.
		\]
		In particular, the cardinality of \(\mathcal R(\rho_\alpha)\) satisfies:
		\[
		|\mathcal R(\rho_\alpha)| = |\supp(\rho_\alpha)| \cdot (n + 1 - |\supp(\rho_\alpha)|).
		\]
	\end{proposition}
	
	\begin{proof}
		Let \(\alpha'\) belong to the right-hand set, i.e.,
		\[
		\alpha' = (\epsilon - a_k)e_k^* - \epsilon e_j^*
		\]
		for some \(j \in \supp(\rho_\alpha)\) and \(k \in I_n\). Since \(j \in \supp(\rho_\alpha)\), \(a_j = \epsilon\), and a direct computation gives \(\langle \rho_\alpha, \alpha' \rangle = -\epsilon^2 = -1\). Thus:
		\[
		\mathcal R(\rho_\alpha) \supseteq \bigcup_{j \in \supp(\rho_\alpha)} \left\{ (\epsilon - a_k)e_k^* - \epsilon e_j^* \mid k \in I_n \right\}.
		\]
		
		To simplify the reverse inclusion, first assume \(\epsilon = 1\), so \(a_j = 1\) for all \(j \in \supp(\rho_\alpha)\). The case \(\epsilon = -1\) follows analogously. Let \(\alpha' \in \mathcal R_n\) satisfy \(\langle \rho_\alpha, \alpha' \rangle = -1\). We prove:
		\begin{equation} \label{Eq 4.1}
			\alpha' = (1 - a_k)e_k^* - e_j^* \quad \text{for some} \quad j \in \supp(\rho_\alpha),  k \in I_n. 
		\end{equation}
		Case 1: If \(\alpha' = \delta e_l^*\) with \(\delta \in \{-1, 1\}\), then \(-1 = \langle \rho_\alpha, \alpha' \rangle = \delta a_l\). Since \(\epsilon = 1\), \(a_l = 1\) and \(\delta = -1\). Thus \(\alpha' = -e_l^*\) for some \(l \in \supp(\rho_\alpha)\), satisfying (\ref{Eq 4.1}) with \(j = k = l\).
		
		Case 2: If \(\alpha' = e_{st}^*\) for \(s, t \in I_n\), then \(-1 = \langle \rho_\alpha, \alpha' \rangle = a_s - a_t\). With \(\epsilon = 1\), \(a_k \in \{0, 1\}\), so \(a_s = 0\) and \(a_t = 1\). Thus \(\alpha'\) satisfies (\ref{Eq 4.1}) with \(j = t \in \supp(\rho_\alpha)\) and \(s \in I_n \setminus \supp(\rho_\alpha)\). As these cover all possible \(\alpha'\), we conclude:
		\[
		\mathcal R(\rho_\alpha) = \bigcup_{j \in \supp(\rho_\alpha)} \left\{ (\epsilon - a_k)e_k^* - \epsilon e_j^* \mid k \in I_n \right\}.
		\]
		
		We now prove the cardinality formula. First, we claim that for distinct \(j, j' \in \supp(\rho_\alpha)\):
		\begin{equation} \label{Eq 4.2}
			\left\{ (\epsilon - a_k)e_k^* - \epsilon e_j^* \mid k \in I_n \right\} \cap \left\{ (\epsilon - a_k)e_k^* - \epsilon e_{j'}^* \mid k \in I_n \right\} = \varnothing.
		\end{equation}
		Suppose equality holds for some \(j \neq j'\) and \(k, k' \in I_n\):
		\[
		(\epsilon - a_k)e_k^* - \epsilon e_j^* = (\epsilon - a_{k'})e_{k'}^* - \epsilon e_{j'}^*.
		\]
		If \(k, k' \in \supp(\rho_\alpha)\), then \(\epsilon - a_k = 0\) and \(\epsilon - a_{k'} = 0\), implying:
		\[
		-\epsilon e_j^* = -\epsilon e_{j'}^*.
		\]
		This implies \( j = j' \). Now suppose \( k \in \supp(\rho_\alpha) \) and \( k' \notin \supp(\rho_\alpha) \). Then \( j' \neq k' \) (since \( k' \notin \supp(\rho_\alpha) \)). By linear independence of \(\{e_{k'}^*, e_{j'}^*\}\), we must have \( j = j' \) or \( j = k' \). If \( j = j' \), this contradicts our assumption \( j \neq j' \). If \( j = k' \), then:
		\[
		-\epsilon = \epsilon - a_{k'} \implies a_{k'} = 2\epsilon,
		\]
		contradicting \( a_{k'} \in \{0, \epsilon\} \). A symmetric argument applies when \( k' \in \supp(\rho_\alpha) \) and \( k \notin \supp(\rho_\alpha) \).
		
		Finally, assume \( k, k' \notin \supp(\rho_\alpha) \). By linear independence of the dual basis, possible equalities require:
        \begin{enumerate}
            \item  \(j = j'\) and  \(k = k'\)  or
            \item \(j = k'\) and \(k = j'\).
        \end{enumerate}
		In the latter case:
		\[
		-\epsilon = \epsilon - a_k \implies a_k = 2\epsilon,
		\]
		again contradicting \( a_k \in \{0, \epsilon\} \). Thus \( j = j' \) in all cases, proving (\ref{Eq 4.2}).
		
		Moreover, for any fixed \( j \in \supp(\rho_\alpha) \), the set:
		\[
		\left\{ (\epsilon - a_k)e_k^* - \epsilon e_j^* \mid k \in I_n \right\}
		\]
		has cardinality \( n + 1 - |\supp(\rho_\alpha)| \) because:

        \begin{enumerate}
            \item When \( k \in \supp(\rho_\alpha) \), \( \epsilon - a_k = 0 \), yielding identical roots \( -\epsilon e_j^* \)

            \item Distinct \( k \notin \supp(\rho_\alpha) \) produce distinct roots
        \end{enumerate}
		
		Thus each set has size \( n - |\supp(\rho_\alpha)| + 1 \). By (\ref{Eq 4.2}), the sets are disjoint across \( j \), so:
		\[
		|\mathcal R(\rho_\alpha)| = \sum_{j \in \supp(\rho_\alpha)} (n + 1 - |\supp(\rho_\alpha)|) = |\supp(\rho_\alpha)| \cdot (n + 1 - |\supp(\rho_\alpha)|).
		\]
	\end{proof}
	
	\begin{remark} \label{rmk:rayset}
		For any \(\rho \in N\),
		\[
		|\mathcal R(\rho)| = |\supp(\rho)| \cdot (n + 1 - |\supp(\rho)|) \geq n.
		\]
		Equality holds when \(|\supp(\rho)| \in \{1, n\}\). In particular, \(\rho\) has exactly \(n\) associated roots if and only if \(\rho = \pm e_i\) for some \(i \in I_n^0\).
	\end{remark}
	At this stage, we have gathered sufficient information to construct fans whose 1-skeleton is compatible with the root set \(\mathcal R_n\). As anticipated in this chapter's introduction, the resulting 1-skeletons are isomorphic to \(\Sigma_{\P^n}\). Adding the completeness hypothesis yields this subsection's main objective.
	
	\begin{theorem} \label{thm:roots_determine_projective_space}
		Let \(\Sigma\) be a complete fan with \(\mathcal{R}(\Sigma) = \mathcal R_n\). Then:
		\[
		\Sigma(1) = \Sigma_{\P^n}(1) \quad \text{or} \quad \Sigma(1) = -\Sigma_{\P^n}(1).
		\]
	\end{theorem}
	
	\begin{proof}
		Suppose \(\Sigma\) is a fan with \(\mathcal{R}(\Sigma) = \mathcal R_n\). For \(j \in I_n\), \cref{prop:rayset} implies \(\rho_{e_j^*}\) has the form \(\sum_{k=1}^n a_k e_k\) with \(a_k \in \{0, -1\}\) and \(a_j = -1\). By \cref{prop:rootset}, its number of associated roots is:
		\[
		|\mathcal R(\rho_{e_j^*})| = |\supp(\rho_{e_j^*})| \cdot (n + 1 - |\supp(\rho_{e_j^*})|).
		\]
		
		If \(\rho_{e_j^*} \notin \{-e_j, e_0\}\), \cref{rmk:rayset} implies \(|R(\rho_{e_j^*})| > n\). Since \(\Sigma\) is complete, \cref{lem:rayspan} gives at least \(n\) additional rays besides \(\rho_{e_j^*}\). Each such ray has at least \(n\) associated roots (\cref{rmk:rayset}). As roots uniquely correspond to rays, these \(n\) rays account for at least \(n^2\) roots. Thus:
		\[
		|\mathcal{R}(\Sigma)| \geq n^2 + |\supp(\rho_{e_j^*})| (n + 1 - |\supp(\rho_{e_j^*})|) > n^2 + n = |\mathcal R_n|,
		\]
		a contradiction. Hence \(\rho_{e_j^*} \in \{-e_j, e_0\}\) for each \(j \in I_n\).
		
		Suppose \(\rho_{e_j^*} = -e_j\) for some \(j\). Since \(\langle e_0, e_j^* \rangle = -1\), \cref{def:demazureroot} forces \(e_0 \notin \Sigma(1)\). Thus \(\rho_{e_j^*} = -e_j\) for all \(j \in I_n\), giving \(\{-e_i \mid i \in I_n\} \subseteq \Sigma(1)\). Note that \(\rho_{\alpha}\) for \(\alpha = e_{st}^*\) lies in this set, but \(\rho_{-e_i^*}\) does not. Thus the only roots \(-e_i^*\) are the only roots that lack an associated ray. Since each ray requires \(\geq n\) roots, we need a ray lying in
		\[
		\bigcap_{i=1}^n \mathcal{P}(-e_i^*) = \{-e_0\}.
		\]
		Thus \(\Sigma(1) = \{-e_i \mid i \in I_n^0\} = -\Sigma_{\P^n}(1)\).
		
		Now suppose \(\rho_{e_j^*} = e_0\) and let us prove \(\rho_{-e_j^*} = e_j\). By symmetry, \(\rho_{-e_j^*} \in \{e_j, -e_0\}\). Assume \(\rho_{-e_j^*} = -e_0\). Then \(-e_0 \in \Sigma(1)\), associated to roots \(-e_k^*\) for all \(k \in I_n\).
		
		We show that no element of \(\mathcal{P}(e_{jk}^*)\) is compatible with this ray configuration. By \cref{prop:rayset}, every \(\rho \in \mathcal{P}(e_{jk}^*)\) satisfies \(\langle \rho, e_j^* \rangle = -1\) or \(\langle \rho, -e_k^* \rangle = -1\). If \(\rho \in \Sigma(1)\), \cref{def:demazureroot} implies \(\rho = \rho_{e_j^*}\) or \(\rho = \rho_{-e_k^*}\). But \(\rho_{e_j^*} = e_0\) and \(\rho_{-e_k^*} = -e_0\) are not in \(\mathcal{P}(e_{jk}^*)\) since:
		\begin{enumerate}
			\item \(e_0 = -\sum_i e_i\) violates the conditions for \(e_{jk}^*\).
			
			\item \(-e_0\) similarly fails the pairing conditions.
		\end{enumerate}
		
		Thus no candidate ray exists for \(e_{jk}^*\), contradicting \(e_{jk}^* \in \mathcal{R}(\Sigma)\). Therefore \(\rho_{-e_j^*} \neq -e_0\), so \(\rho_{-e_j^*} = e_j\). Symmetric reasoning yields \(\Sigma(1) = \Sigma_{\P^n}(1)\).
	\end{proof}
	
	A direct consequence of this theorem and the framework developed in the previous section is the following corollary.
	
	\begin{corollary} \label{cor 1}
		Let \(X\) be a complete toric variety with \(\Aut^\circ X = \operatorname{PGL}_{n+1}\). Then \(X \simeq \P^n\).
	\end{corollary}
	
	\begin{proof}
		Let \(\Sigma_X\) be the fan corresponding to \(X\). By \cref{lem:rootdatum_demazureroots}, the roots of \(\Sigma_X\) coincide with the roots of \(\Aut^\circ_X = \operatorname{PGL}_{n+1}\). We know that the root system of \(\operatorname{PGL}_{n+1}\) is \(\mathcal{R}_n\). Thus the Demazure root set of \(\Sigma_X\) is \(\mathcal{R}_n\). 
		
		Since \(X\) is complete, \(\Sigma_X\) is a complete fan. By \cref{thm:roots_determine_projective_space}, we may assume without loss of generality that \(\Sigma_X(1) = \Sigma_{\P^n}(1)\). \cref{prop:projective_space_skeleton} then implies \(X \simeq \P^n\).
	\end{proof}
	\subsection{The general case}
	
	We now consider semisimple \(\Aut^\circ X\). By \cite[Section 3, \S 4, Corollary to Theorem 2]{DemazureToric}, \(\Aut^\circ_X\) decomposes as a product of projective linear groups. Combining results from the last subsection with combinatorial tools from Section 2, we establish propositions leading to \cref{thm:application}. The proof strategy proceeds as follows. For each \(i\), let \(T_i\) be the standard maximal torus of \(\operatorname{PGL}_{n_i+1}\), with character lattice \(M_i\) and cocharacter lattice \(N_i\). Then \(X\) is a toric variety with:
	\begin{enumerate}
		\item Torus \(T = T_1 \times \cdots \times T_k\).
		
		\item Character lattice \(M = \bigoplus_{i=1}^k M_i\).
		
		\item Cocharacter lattice \(N = \bigoplus_{i=1}^k N_i\)
	\end{enumerate}
	
	Let \(\Sigma\) be the fan of \(X\) in \(V \coloneqq N \otimes_{\Z} \R = \bigoplus_{i=1}^k V_i\), and \(\mathcal R(\Sigma) \subseteq M\) its Demazure root set. Then \(\mathcal R(\Sigma)\) must coincide with the product of root data:
	\begin{equation} \label{Eq 4.3}
	\mathcal	R(\Sigma) = \bigsqcup_{i=1}^k \mathcal R(\operatorname{PGL}_{n_i+1}),
	\end{equation}
	where \(\mathcal R(\operatorname{PGL}_{n_i+1}) \subseteq M_i\) is embedded into \(M\).
	
	We now derive consequences for \(\Sigma_X\). Our objective is to prove that for each \(i \in I_k\), the restricted fan \(\Sigma|_{V_i}\) equals \(\Sigma_{\P^{n_i}}\). This will follow from two subsequent lemmas. Let \(\rho \in \Sigma_X(1)\). Since \(\rho \neq 0\), there exists some \(t \in I_k\) such that \(\pi_t(\rho) \neq 0\), where \(\pi_t: N \to N_t\) is the projection. The following lemma explicitly describes \(\pi_t(\rho)\).
	
	\begin{lemma} \label{lem 1}
		For \(t \in I_k\), there exists a \(\mathbb{Z}\)-basis of \(N_t\) such that \(\pi_t(\Sigma_X(1)) \setminus \{0\} = \Sigma_{\P^{n_t}}(1)\).
	\end{lemma}
	
	We continue with a lemma ensuring \(\rho\) lies entirely in one factor:
	
	\begin{lemma} \label{lem 3}
		If \(\rho \in \Sigma_X(1)\) satisfies \(\pi_t(\rho) \neq 0\) for some \(t \in I_k\), then \(\rho \in N_t\).
	\end{lemma}
	
	In particular, \(\pi_t(\rho) = \rho\), so \(\rho \in \Sigma_{\P^{n_t}}(1)\). By \cref{lem 1} and \cref{lem 3}:
	\[
	\Sigma_X(1) = \bigsqcup_{t=1}^k \Sigma_X(1)|_{N_t} = \bigsqcup_{t=1}^k \Sigma_{\P^{n_t}}(1).
	\]
	By \cref{prop:projective_space_skeleton}, there is a unique complete toric variety with this 1-skeleton. \cref{thm:main} then implies:
	\[
	\Sigma_X = \bigoplus_{t=1}^k \Sigma_{\P^{n_t}}.
	\]
	Thus:
	\[
	X = \prod_{t=1}^k \P^{n_t},
	\]
	completing the proof of \cref{thm:application}.
	
	The remainder of this section proves \cref{lem 1} and \cref{lem 3}. Recall that \(X\) is a complete toric variety with \(\Aut^\circ X = \prod_{i=1}^k \operatorname{PGL}_{n_i+1}\), where \(T_i\) is the standard maximal torus of \(\operatorname{PGL}_{n_i+1}\), and \(M_i\), \(N_i\) are its character and cocharacter lattices.
	
	First, we prove \cref{lem 1}, as it will later help us establish \cref{lem 3}. \cref{Eq 4.3} describes the roots of \(\Aut_X\) lying in \(M_t\). When these roots are interpreted as elements of a fan in \(N_t\), they constrain the rays of \(\Sigma_X(1)\) that project nontrivially onto this space. We now proceed with the proof.
	
	\begin{proof}[Proof of \cref{lem 1}]
		Let \(t \in I_k\). Using \cref{Eq 4.3}, we observe that in a suitable \(\mathbb{Z}\)-basis, the roots of \(\text{PGL}_{n_{t+1}}\) in \(M_t\) are:
		\begin{equation} \label{Eq 4.4}
			\{e^*_{i,t}\} \cup \{e^*_{ij,t} \mid i \neq j\}. 
		\end{equation}
		This fixes a \(\mathbb{Z}\)-basis \(\{e_{i,t}\}\) of \(N_t\). If \(\pi_t(\rho) \neq 0\) for \(\rho \in \Sigma_X(1)\), then
		\[
		\pi_t(\rho) = \sum_{i=1}^{n_t} a_i e_{i,t}.
		\]
		Since \(\pi_t(\rho) \neq 0\), there exists \(\ell \in I_{n_t}\) with \(a_\ell \neq 0\). Without loss of generality, assume \(a_\ell > 0\). Note that \(\langle \rho, -e^*_{\ell,t} \rangle = -a_\ell < 0\). As \(-e^*_{\ell,t}\) is a root in \(M_t\), \cref{def:demazureroot} implies that \(\rho\) must be \(\rho_{-e^*_{\ell,t}}\). Therefore,
		\begin{equation}\label{Eq 4.5}
			\pi_t(\Sigma_X(1)) \setminus \{0\} = \{\pi_t(\rho_\alpha) \mid \alpha \text{ is a root in } M_t\}.
		\end{equation}
		For any \(\alpha\) in this set, \cref{def:demazureroot} guarantees a ray \(\rho_\alpha \in \Sigma_X(1)\) such that \(\langle \rho_\alpha, \alpha \rangle = -1\). Since \(\alpha \in M_t\), we have \(\langle \rho_\alpha, \alpha \rangle = \langle \pi_t(\rho_\alpha), \alpha \rangle\). Thus, the set (\ref{Eq 4.5}) consists of rays in \(N_t\) associated with the roots (\ref{Eq 4.4}), which are the roots of \(\PGL_{n_{t+1}}\). By \cref{thm:roots_determine_projective_space}, this set of rays must be the \(1\)-skeleton of \(\mathbb{P}^{n_t}\).
	\end{proof}
	
	The following lemma will be used to prove \cref{lem 3}. It exploits the uniqueness of rays associated with roots to show that the projections \(\pi_t\) are \textit{injective} when restricted to rays of the fan.
	
	\begin{lemma}\label{lem 2}
		Let \( t \in I_k \). If \( \rho \in \pi_t(\Sigma_X(1)) \setminus \{0\} \), then \( |\pi_t^{-1}(\rho) \cap \Sigma_X(1)| = 1 \).
	\end{lemma}
	
	\begin{proof}
		Let \(\rho_1, \rho_2 \in \Sigma_X(1)\) satisfy \(\pi_t(\rho_1) = \pi_t(\rho_2) \neq 0\). By Proposition 4.10, \(\pi_t(\rho_1)\) is a ray of the \(1\)-skeleton of \(\Sigma_{\mathbb{P}^{n_t}}\). By assumption, \(\pi_t(\rho_2)\) is the same ray. Since every ray in \(\Sigma_{\mathbb{P}^{n_t}}(1)\) has associated roots, there exists some root \(\alpha \in M_t\) such that:
		\[
		\langle \rho_1, \alpha \rangle = \langle \pi_t(\rho_1), \alpha \rangle = -1.
		\]
		The same holds for \(\rho_2\) with the same root \(\alpha\). By \cref{def:demazureroot}, there is a unique ray in \(\Sigma_X(1)\) satisfying this property. Thus \(\rho_1 = \rho_2\).
	\end{proof}
	
	\begin{proof}[Proof of \cref{lem 3}]
		Without loss of generality, assume \(n_s \geq n_t\). We proceed by contradiction. Suppose there exist \(\rho \in \Sigma_X(1)\) and distinct \(s, t \in I_k\) such that \(\pi_s(\rho) \neq 0\) and \(\pi_t(\rho) \neq 0\). By \cref{lem 1}, the projections of \(\rho\) onto \(N_s\) and \(N_t\) are rays of \(\Sigma_{\mathbb{P}^{n_s}}(1)\) and \(\Sigma_{\mathbb{P}^{n_t}}(1)\), respectively. After fixing suitable \(\mathbb{Z}\)-bases for \(N_s\) and \(N_t\), we may assume \(\pi_s(\rho) = e_{1,s}\) and \(\pi_t(\rho) = e_{1,t}\). Using dual bases, the roots of \(\text{PGL}_{n_s+1}\) in \(M_s\) and \(\text{PGL}_{n_t+1}\) in \(M_t\) are respectively:
		\[
		\{e_{i,s}^*\} \cup \{e_{ij,s}^* \mid i \neq j\} \quad \text{and} \quad \{e_{i,t}^*\} \cup \{e_{ij,t}^* \mid i \neq j\}.
		\]
		We consider two cases: first when \(n_s = n_t = 1\), and then when \(n_s > 1\).
		
		\textit{Case 1: \(n_s = n_t = 1\)}. By \cref{lem 1}, \(\pi_s(\Sigma_X(1)) \setminus \{0\} = \Sigma_{\mathbb{P}^1}(1)\) and \(\pi_t(\Sigma_X(1)) \setminus \{0\} = \Sigma_{\mathbb{P}^1}(1)\). We now analyze subcases:
		
		\textit{Subcase 1:} There exists \(\rho' \in \Sigma_X(1)\) such that \(\pi_{st}(\rho') = -e_{1,s} - e_{1,t}\), where \(\pi_{st}: N \to N_s \oplus N_t\) is the projection. In this case, note that by \cref{lem 2}, for every \(\hat{\rho} \in \Sigma_X(1) \setminus \{ \rho, \rho' \}\), we have \(\pi_{st}(\hat{\rho}) = 0\), since the images of \(\rho\) and \(\rho'\) under \(\pi_s\) and \(\pi_t\) constitute all nonzero rays of \(\pi_s(\Sigma_X(1))\) and \(\pi_t(\Sigma_X(1))\), respectively. Thus, only two rays of \(\Sigma_X(1)\) project nontrivially to \(N_s \oplus N_t\).  
		
		Since the image of any cone \(\sigma \in \Sigma_X\) under \(\pi_{st}\) is generated by the images of its rays, every cone \(\sigma \in \Sigma_X\) satisfies \(\sigma \subset \pi_{st}^{-1}(\operatorname{cone}(\pi_{st}(\rho), \pi_{st}(\rho')))\). Consequently, for any \(x \in N \setminus \pi_{st}^{-1}(\operatorname{cone}(\pi_{st}(\rho), \pi_{st}(\rho')))\), we have \(x \notin \sigma\) for all \(\sigma \in \Sigma_X\). This contradicts the completeness of \(\Sigma_X\), as the fan fails to cover the ambient space. Thus, this subcase cannot occur.
		
		\textit{Subcase 2:} There exist \(\rho', \rho_3 \in \Sigma_X(1)\) such that \(\pi_{st}(\rho') = (-e_{1,s}, 0)\) and \(\pi_{st}(\rho_3) = (0, -e_{1,t})\) in \(N_s \oplus N_t\). By \cref{lem 2}, for every \(\eta \in \Sigma_X(1) \setminus \{\rho, \rho', \rho_3\}\), we have \(\pi_{st}(\eta) = (0,0)\). Define \(\alpha := -e_{1,s}^* + e_{1,t}^*\). We now prove \(\alpha\) is a Demazure root of \(\Sigma_X\). First, note that \(\alpha\) is the sum of two Demazure roots of \(\Sigma_X\). Direct computation shows:
		\[
		\langle \rho_3, \alpha \rangle = -1, \quad \langle \rho, \alpha \rangle = 0, \quad \langle \rho', \alpha \rangle = 1.
		\]
		For any \(\eta \in \Sigma_X(1) \setminus \{\rho, \rho', \rho_3\}\):
		\[
		\langle \eta, \alpha \rangle = \langle \eta, -e_{1,s}^* \rangle + \langle \eta, e_{1,t}^* \rangle.
		\]
		Since \(-e_{1,s}^*\) and \(e_{1,t}^*\) are Demazure roots of \(\Sigma_X\), and \(\eta\) is not the distinguished ray for either root (as \(\rho\) pairs to \(-1\) with \(-e_{1,s}^*\) and \(\rho_3\) pairs to \(-1\) with \(e_{1,t}^*\)), we have:
		\[
		\langle \eta, -e_{1,s}^* \rangle \geq 0 \quad \text{and} \quad \langle \eta, e_{1,t}^* \rangle \geq 0.
		\]
		Thus \(\langle \eta, \alpha \rangle \geq 0\). Therefore, \(\alpha\) satisfies the definition of a Demazure root with distinguished ray \(\rho_3\). However, \(\alpha \notin \bigsqcup_{i=1}^{k} \mathcal{R}(\mathrm{PGL}_{n_i+1})\), contradicting the hypothesis that all Demazure roots belong to this disjoint union.
		
		We conclude that if \( n_s = n_t = 1 \), there cannot exist \(\rho \in \Sigma_X(1)\) with both \(\pi_s(\rho) \neq 0\) and \(\pi_t(\rho) \neq 0\). We now proceed to the case where \( n_s > 1 \). Following the strategy of Subcase 2, we construct a root outside the root system at $X$. Observe that \(\rho\) is associated with roots in the set:
		\[
		\{e_{k,t}^* \mid k \in I_{n_t}\} \cup \{-e_{1,t}^*\},
		\]
		since for every \(\alpha_t\) in this set \(\langle \rho, \alpha_t \rangle = -1.\) We now justify the existence of \(\rho' \in \Sigma_X(1) \setminus \{\rho\}\) and a Demazure root \(\alpha_s \in M_s\) satisfying:
		\begin{equation} \label{Eq 4.6}
			\langle \rho, \alpha_s \rangle = 1, \quad \langle \rho', \alpha_s \rangle = -1, \quad \langle \rho', \alpha_t \rangle = 0.
		\end{equation}
		Take \(\rho' \in \Sigma_X(1)\) with \(\pi_s(\rho') \neq 0\). Either \(\pi_t(\rho') = 0\) or \(\pi_t(\rho') \neq 0\). First assume \(\pi_t(\rho') = 0\). By \cref{lem 1}, we consider the following subcases:
		
		\textit{Subcase 1:} \(\pi_s(\rho') = e_{j,s}\) for some \(j \in I_n\) (note that by \cref{lem 2}, \(j \neq 1\)). In this case, we choose \(\alpha_s = e_{1j,s}^*\) and \(\alpha_t = -e_{1,t}^*\). Then \(\rho\), \(\alpha_t\), \(\rho'\), and \(\alpha_s\) satisfy condition (\ref{Eq 4.6}).
		
		\textit{Subcase 2:} \(\pi_s(\rho') = e_{0,s}\). In this case, we choose \(\alpha_s = e_{1,s}^*\) and \(\alpha_t = -e_{1,t}^*\). Then \(\rho\), \(\alpha_t\), \(\rho'\), and \(\alpha_s\) satisfy condition (\ref{Eq 4.6}). Now assume \(\pi_t(\rho') \neq 0\). By \cref{lem 1}, we consider the following subcases:
		
		\textit{Subcase 1:} \(\pi_t(\rho') = e_{j,t}\) for some \(j \in I_n\) (note \(j \neq 1\) by \cref{lem 2}). We choose \(\alpha_t = -e_{1,t}^*\). As before, \(\pi_s(\rho') = e_{j',s}\) for some \(j' \in I_n\) or \(\pi_s(\rho') = e_{0,s}\). For each possibility, select \(\alpha_s\) as in the previous subcases. Then \(\rho\), \(\alpha_t\), \(\rho'\), and \(\alpha_s\) satisfy (\ref{Eq 4.6}).
		
		\textit{Subcase 2:} \(\pi_t(\rho') = e_{0,t}\). Here, no root \(\alpha_t \in M_t\) satisfies (\ref{Eq 4.6}). However, since \(n_s > 1\), \(|\Sigma_{\mathbb{P}^{n_s}}(1)| \geq 3\). Thus there exists \(\tilde{\rho} \in \Sigma_X(1) \setminus \{\rho, \rho'\}\) with \(\pi_s(\tilde{\rho}) \neq \pi_s(\rho)\) and \(\pi_s(\tilde{\rho}) \neq \pi_s(\rho')\). By \cref{lem 2}, \(\pi_t(\tilde{\rho}) \neq e_{0,t}\), reducing this to the previous subcases.
		
		This proves we can always find \(\rho'\), \(\alpha_s\), and \(\alpha_t\) satisfying (\ref{Eq 4.6}). We now show \(\alpha_t + \alpha_s\) is a Demazure root of \(\Sigma_X\). First,  
		\[
		\langle \rho', \alpha_t + \alpha_s \rangle = -1 \quad \text{and} \quad \langle \rho, \alpha_t + \alpha_s \rangle = 0.
		\]  
		Since \(\alpha_t\) and \(\alpha_s\) are Demazure roots with distinguished rays \(\rho\) and \(\rho'\) respectively, \(\langle \eta, \alpha_t \rangle \geq 0\) and \(\langle \eta, \alpha_s \rangle \geq 0\) for all \(\eta \in \Sigma_X(1) \setminus \{\rho, \rho'\}\). Thus \(\langle \eta, \alpha_t + \alpha_s \rangle \geq 0\). Therefore \(\alpha_t + \alpha_s\) is a Demazure root of \(\Sigma_X\), contradicting the hypothesis that \(\bigsqcup_{i=1}^k \mathcal{R}(\mathrm{PGL}_{n_i+1})\) contains all Demazure roots of \(\Sigma_X\).
	\end{proof}

\bibliographystyle{alpha}
\bibliography{bibliografia.bib}
\end{document}